\providecommand{\U}[1]{\protect \rule{.1in}{.1in}}
\newtheorem{theorem}{Theorem}[section]
\newtheorem{definition}[theorem]{Definition}
\newtheorem{lemma}[theorem]{Lemma}
\newtheorem{proposition}[theorem]{Proposition}
\newtheorem{remark}[theorem]{Remark}
\newenvironment{proof}[1][Proof]{\noindent \textbf{#1.} }{\  $\Box$}
\begin{document}

\title{On the Representation Theorem of $G$-Expectations and Paths of $G$--Brownian Motion}
\author{Mingshang HU and Shige PENG\thanks{The author thanks the partial support from The National Basic
Research Program of China (973 Program) grant No. 2007CB814900
(Financial Risk). }\\Institute of Mathematics\\Shandong
University\\250100, Jinan, China\\peng@sdu.edu.cn}

\maketitle

\begin{abstract}
We give a very simple and elementary proof of the existence of a
weakly compact family of probability measures $\{P_{\theta}:\theta
\in \Theta \}$ to represent an important sublinear expectation---
$G$-expectation $\mathbb{E}[\cdot]$. We also give a concrete
approximation of a bounded continuous function $X(\omega)$ by an
increasing sequence of cylinder functions $L_{ip}(\Omega)$ in order
to prove that $C_{b}(\Omega)$ belongs to the
$\mathbb{E}[|\cdot|]$-completion of the $L_{ip}(\Omega)$.

\end{abstract}

\medskip

\noindent{\footnotesize{\bf Keywords:\hspace{2mm} Probability and
distribution uncertainty,
$G$-normal distribution, $G$-Brownian motion, Continuous paths}}% At least three keywords

%\noindent\footnotesize{\bf MSC(2000):\hspace{2mm} 60H10, 60H05, 60H30, 60E05, 60E07, 62C05, 62D05}%Five letters(Mathematics Subject Classification 2000)
 \vspace{2mm}
\baselineskip 15pt
\renewcommand{\baselinestretch}{1.10}
\parindent=16pt  \parskip=2mm
\rm\normalsize\rm

\section{Introduction}

Recently a new stochastic process called $G$-Brownian motion has
been introduced in \cite{Peng2006a, Peng2006b} under a framework of
sublinear expectation called $G$-expectation $\mathbb{E}$. From the
well-known representation theorem of sublinear expectation, a
$G$-expectation $\mathbb{E}$ can be represented by an upper
expectation: $\mathbb{E}[\cdot]=\sup_{\lambda \in \Lambda}E_{\lambda
}[\cdot]$, where $\{E_{\lambda}:\lambda \in \Lambda \}$ is a family
of finitely additive linear expectation (see [Huber], [Delb2] and
[P5]). In \cite{DHP} Denis, Hu and Peng have introduced a method of
optimal stochastic controls (see [DHP], Section 4.1) to construct a
weakly compact family of ($\sigma$-additive) probability measures
$\{P_{\theta}:\theta \in \Theta \}$ such that
\[
\mathbb{E}[X]=\sup_{\lambda \in \Lambda}\int_{\Omega}X(\omega)dP_{\lambda}.
\]
where $\Omega$ is the space of continuous paths. Since the representation
$\sup_{\lambda \in \Lambda}E_{\lambda}[\cdot]$ is very elementary---only
Hahn-Banach theorem is involved--- a nature question is: Can we use the family
$\{E_{\lambda}:\lambda \in \Lambda \}$ to find $\{P_{\theta}:\theta \in \Theta \}$
instead of passing the above mentioned long proof by using sophisticate
stochastic control theory?

In this paper we give an affirmative answer to this question. Our
method can be regarded as a combination and extension of the
original Brownian motion construction approach of Kolmogorov and the
Lipschitz cylinder functions $L_{ip}(\Omega)$ (see Section 2 for its
definition) introduced in \cite{Peng2005} and \cite{Peng2006a}. This
permits to give a much simpler proof involving only elementary
results of probability theory. The proof is short but the importance
is obvious since it involves the foundation of the theory of
$G$-Brownian motion and the related stochastic calculus.

In this paper, we also give a concrete approximation of a bounded continuous
function $X(\omega)$ by an increasing sequence of bounded and Lipschitz
functions $L_{ip}(\Omega)$ in order to prove that $C_{b}(\Omega)$ belongs to
the $\mathbb{E}[|\cdot|]$-completion of $L_{ip}(\Omega)$.

This paper is organized as follows: in Section 2, we use Hahn-Banach theorem
to prove representation theorem of sublinear expectation. In Section 3, we
find a weakly compact family of probability measures to represent
$G$-expectation. In Section 4, we prove that every bounded continuous function
belongs to the $\mathbb{E}[|\cdot|]$-completion of $L_{ip}(\Omega)$.

\section{Basic settings of $G$-Brownian motion and $G$-expectation}

We present some preliminaries in the theory of sublinear
expectations and the related $G$-Brownian motions. More details of
this section can be found in [P5] and [P2008].

\begin{definition}
{\label{Def-1} { Let }}$\Omega$ be a given set and let $\mathscr{H}$
be a linear space of real valued functions defined on $\Omega$ with
$c\in \mathscr{H}$ for all constants $c$. $\mathscr{H}$ is
considered as the space of our \textquotedblleft random
variables\textquotedblright. {{A \textbf{nonlinear expectation
}$\mathbb{\hat{E}}$ on $\mathscr{H}$ is a functional
$\mathbb{\hat{E}}:\mathscr{H}\mapsto \mathbb{R}$ satisfying the
following properties: for all $X,Y\in \mathscr{H}$, we have\newline%
\  \  \newline \textbf{(a) Monotonicity:} \  \  \  \  \  \  \  \  \  \  \  \  \ If $X\geq Y$
then $\mathbb{\hat{E}}[X]\geq \mathbb{\hat{E}}[Y].$\newline \textbf{(b) Constant
preserving: \  \ }\  \ $\mathbb{\hat{E}}[c]=c$.\newline The triple }}%
$(\Omega,\mathscr{H},\mathbb{\hat{E}})$ is called a nonlinear
expectation space (compare with a probability space
$(\Omega,\mathscr{F},\mathbb{P})$). {{In this paper we are mainly
concerned with sublinear expectation where the expectation
$\mathbb{\hat{E}}$ satisfies also}}\newline{{\textbf{(c)}
\textbf{Sub-additivity: \  \  \  \ }}}\  \  \  \  \  \  \  \ $\mathbb{\hat{E}%
}[X]-\mathbb{\hat{E}}[Y]\leq \mathbb{\hat{E}}[X-Y].$\newline{{\textbf{(d)
Positive homogeneity: } \ $\mathbb{\hat{E}}[\lambda X]=\lambda \mathbb{\hat{E}%
}[X]$,$\  \  \forall \lambda \geq0$.\newline}}\newline If only
\textbf{(c) }and \textbf{(d) }are satisfied, $\mathbb{\hat{E}}$ is
called a sublinear functional.
\end{definition}

The following representation theorem for sublinear expectations is
very useful (see \cite{NewCLT} for the proof).

\begin{lemma}
\label{le0}\bigskip Let {{$\mathbb{\hat{E}}$}} be a sublinear
functional defined on a linear space $\mathscr{H}$, i.e.,
\textbf{(c) }and \textbf{(d)} hold for $\mathbb{\hat{E}}$. Then
there exists a family $\mathscr{Q}=\{E_{\theta}:\theta \in \Theta
\}$ of linear functionals defined on $\mathscr{H}$ such that
\[
{{\mathbb{\hat{E}}}}[X]:=\sup_{\theta \in \Theta}E_{\theta}[X],\  \  \text{for }%
X\in \mathscr{H}.
\]
and such that, for each $X\in \mathscr{H}$, there exists a $\theta
\in \Theta$ such that {{$\mathbb{\hat{E}}$}}$[X]:=E_{\theta}[X]$. If
we assume moreover that $\mathbb{\hat{E}}$ is a sublinear functional
defined on a linear space $\mathscr{H}$ of functions on $\Omega$
such that \textbf{(a) }holds (resp.\textbf{ (a), (b)} hold) for
$\mathbb{\hat{E}}$, then \textbf{(a) }also holds \ (resp.\textbf{
(a), (b)} hold) for $E_{\theta}$, $\theta \in \Theta$.{ }
\end{lemma}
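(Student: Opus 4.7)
The plan is to apply the Hahn--Banach extension theorem in its purely algebraic (real vector space) form, with the sublinear functional $\mathbb{\hat{E}}$ itself playing the role of the dominating Minkowski functional. Concretely, for each fixed $X_{0}\in \mathscr{H}$ I would construct a linear functional on $\mathscr{H}$ that is dominated by $\mathbb{\hat{E}}$ pointwise and agrees with $\mathbb{\hat{E}}$ at $X_{0}$; then $\Theta$ will be indexed by all linear functionals on $\mathscr{H}$ dominated by $\mathbb{\hat{E}}$.

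First I would record two algebraic consequences of (c) and (d). Positive homogeneity with $\lambda=0$ gives $\mathbb{\hat{E}}[0]=0$; combined with sub-additivity this yields $0=\mathbb{\hat{E}}[X+(-X)]\leq \mathbb{\hat{E}}[X]+\mathbb{\hat{E}}[-X]$, i.e. $-\mathbb{\hat{E}}[-X]\leq \mathbb{\hat{E}}[X]$ for every $X$. These two facts are what allow a clean one-dimensional extension: fix $X_{0}$ and define $E_{0}:\mathrm{span}(X_{0})\to \mathbb{R}$ by $E_{0}(\lambda X_{0})=\lambda \mathbb{\hat{E}}[X_{0}]$. I would verify $E_{0}(\lambda X_{0})\leq \mathbb{\hat{E}}[\lambda X_{0}]$ by splitting into $\lambda\geq 0$ (immediate from (d)) and $\lambda<0$, where one writes $E_{0}(\lambda X_{0})=-|\lambda|\mathbb{\hat{E}}[X_{0}]\leq |\lambda|\mathbb{\hat{E}}[-X_{0}]=\mathbb{\hat{E}}[\lambda X_{0}]$ using the inequality just derived. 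Hahn--Banach then extends $E_{0}$ to a linear functional $E_{X_{0}}$ on all of $\mathscr{H}$ with $E_{X_{0}}\leq \mathbb{\hat{E}}$, and by construction $E_{X_{0}}[X_{0}]=\mathbb{\hat{E}}[X_{0}]$.

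Let $\mathscr{Q}=\{E_{\theta}:\theta\in\Theta\}$ be the collection of all linear functionals on $\mathscr{H}$ dominated by $\mathbb{\hat{E}}$. For any $X\in \mathscr{H}$ and any $\theta$, $E_{\theta}[X]\leq \mathbb{\hat{E}}[X]$, and $E_{\theta}[-X]\leq \mathbb{\hat{E}}[-X]$ gives $-\mathbb{\hat{E}}[-X]\leq E_{\theta}[X]\leq \mathbb{\hat{E}}[X]$. The previous paragraph shows the upper bound is attained at some $\theta=\theta(X)$, hence $\mathbb{\hat{E}}[X]=\sup_{\theta\in\Theta}E_{\theta}[X]$ with supremum attained. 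To transfer the optional properties, suppose (a) holds: if $X\geq 0$ then $\mathbb{\hat{E}}[-X]\leq \mathbb{\hat{E}}[0]=0$, so $E_{\theta}[-X]\leq 0$, i.e. $E_{\theta}[X]\geq 0$; by linearity this yields monotonicity. If (b) also holds, then for every constant $c$ both $E_{\theta}[c]\leq \mathbb{\hat{E}}[c]=c$ and $-E_{\theta}[c]=E_{\theta}[-c]\leq \mathbb{\hat{E}}[-c]=-c$, forcing $E_{\theta}[c]=c$.

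I do not expect a real technical obstacle here, since Hahn--Banach does all the heavy lifting and no topology on $\mathscr{H}$ is needed. The only delicate point I would be careful about is the sign case $\lambda<0$ in the one-dimensional construction: one cannot use positive homogeneity there and must invoke the inequality $-\mathbb{\hat{E}}[-X_{0}]\leq \mathbb{\hat{E}}[X_{0}]$. Overlooking this step would produce a ``linear functional'' that is not dominated by $\mathbb{\hat{E}}$ on all of $\mathrm{span}(X_{0})$, and the Hahn--Banach extension would fail.
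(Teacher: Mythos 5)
Your proof is correct and is exactly the argument the paper has in mind: the paper does not reproduce a proof of Lemma \ref{le0} but refers to \cite{NewCLT}, noting in the introduction that ``only Hahn--Banach theorem is involved,'' and the standard proof there is precisely your one-dimensional extension $E_{0}(\lambda X_{0})=\lambda\mathbb{\hat{E}}[X_{0}]$ dominated by $\mathbb{\hat{E}}$ followed by Hahn--Banach, with the same transfer of \textbf{(a)} and \textbf{(b)} via $E_{\theta}\leq\mathbb{\hat{E}}$. Your care with the case $\lambda<0$, using $-\mathbb{\hat{E}}[-X_{0}]\leq\mathbb{\hat{E}}[X_{0}]$, is exactly the right delicate point.
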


For a given positive integer $n$ we will denote by $(x,y)$ the
scalar product of $x$, $y\in \mathbb{R}^{n}$ and by $\left \vert
x\right \vert =(x,x)^{1/2}$ the Euclidean norm of $x$. We often
consider a nonlinear expectation space
$(\Omega,\mathscr{H},\mathbb{\hat{E})}$ such that $X_{1}$,$\cdots$,$X_{n}%
\in \mathscr{H}$ implies $\varphi(X_{1},\cdots,X_{n})\in
\mathscr{H}$ for each $\varphi \in C_{l.Lip}(\mathbb{R}^{n})$, where
$C_{l.Lip}(\mathbb{R}^{n})$ is the space of real continuous
functions defined on $\mathbb{R}^{n}$ such that
\[
|\varphi(x)-\varphi(y)|\leq C(1+|x|^{k}+|y|^{k})|x-y|,\  \  \forall
x,y\in \mathbb{R}^{n},
\]
where $k$ depends only on $\varphi$.

We recall some important notions of nonlinear expectations
distributions (see [P5,Peng2008]):

\begin{definition}
Let $X_{1}$ and $X_{2}$ be two $n$--dimensional random vectors defined
respectively in {sublinear expectation spaces }$(\Omega_{1},\mathscr{H}%
_{1},\mathbb{\hat{E}}_{1})${ and
}$(\Omega_{2},\mathscr{H}_{2},\mathbb{\hat {E}}_{2})$. They are
called identically distributed, denoted by $X_{1}\sim X_{2}$, if
\[
\mathbb{\hat{E}}_{1}[\varphi(X_{1})]=\mathbb{\hat{E}}_{2}[\varphi
(X_{2})],\  \  \  \forall \varphi \in C_{l.Lip}(\mathbb{R}^{n}).
\]

\end{definition}

\begin{definition}
In a sublinear expectation space
$(\Omega,\mathscr{H},\mathbb{\hat{E}})$ a random vector
$Y=(Y_{1},\cdots,Y_{n})$, $Y_{i}\in \mathscr{H}$ is said to be
independent to another random vector $X=(X_{1},\cdots,X_{m})$, $X_{i}%
\in \mathscr{H}$ under $\mathbb{\hat{E}}[\cdot]$ if for each test
function $\varphi \in C_{l.Lip}(\mathbb{R}^{m}\times
\mathbb{R}^{n})$ we have
\[
\mathbb{\hat{E}}[\varphi(X,Y)]=\mathbb{\hat{E}}[\mathbb{\hat{E}}%
[\varphi(x,Y)]_{x=X}].
\]
$\bar{X}=(\bar{X}_{1},\cdots,\bar{X}_{m})$ is said to be an
independent copy of $X$ if $\bar{X}\sim X$ and $\bar{X}$ is
independent to $X$.
\end{definition}

\begin{definition}
(\textbf{$G$-normal distribution}) \label{Def-Gnormal} A
$d$-dimensional random vector $X=(X_{1},\cdots,X_{d})$ in a
sublinear expectation space $(\Omega,\mathscr{H},\mathbb{\hat{E}})$
is called $G$-normal distributed if for each $a\,$, $b\geq0$ we have
\begin{equation}
aX+b\bar{X}\overset{d}{=}\sqrt{a^{2}+b^{2}}X,\  \label{srt-a2b2}%
\end{equation}
where $\bar{X}$ is an independent copy of $X$. Here the letter $G$ denotes the
function
\[
G(A):=\frac{1}{2}\mathbb{\hat{E}}[(AX,X)]:\mathbb{S}_{d}\mapsto
\mathbb{R}.
\]

\end{definition}

\begin{remark}
It is easy to prove that the function $G$ is a monotonic and sublinear
function:%
\[
\left \{
\begin{array}
[c]{rl}%
G(A+\bar{A}) & \leq G(A)+G(\bar{A}),\\
G(\lambda A) & =\lambda G(A),\  \  \forall \lambda \geq0,\\
G(A) & \geq G(\bar{A}),\ if\  \ A\geq \bar{A}.
\end{array}
\right.
\]
From Lemma \ref{le0}, there exists a (bounded) subset $\Sigma \subset
\mathbb{S}_{d}$ such that $\gamma \geq0$ for each $\gamma \in \Sigma$ and%
\[
G(A)=\frac{1}{2}\sup_{\gamma \in \Sigma}\mathrm{tr}[A\gamma],\quad
A\in\mathbb{S}_{d}.
\]
We often denote $X\sim \mathscr{N}(0,\Sigma)$. In \cite{Peng2006a,
Peng2006b, Peng2007b, NewCLT} it is proved that for each given
monotonic and sublinear function $G$ defined on $\mathbb{S}_{d}$
there exists a random vector in some sublinear expectation space
$(\Omega,\mathscr{H},\mathbb{\hat{E}})$ such that $X\sim
\mathscr{N}(0,\Sigma )$, namely, $X$ is $G$-normal distributed. It
is also proved in Peng [P5,Peng2008] that,
for each $\mathbf{a}\in \mathbb{R}^{d}$ and $p\in \lbrack1,\infty)$%
\[
\mathbb{\hat{E}}[|\left(  \mathbf{a},X\right)  |^{p}]=\frac{1}{\sqrt
{2\pi
\sigma_{\mathbf{aa}^{T}}^{2}}}\int_{-\infty}^{\infty}|x|^{p}\exp
\left( \frac{-x^{2}}{2\sigma_{\mathbf{aa}^{T}}^{2}}\right)  dx,
\]
where $\sigma_{\mathbf{aa}^{T}}^{2}=2G(\mathbf{aa}^{T})$.
\end{remark}

\begin{definition}
\label{Def-3}(\textbf{\cite{Peng2006a} and \cite{Peng2007b}}) Let $G:\mathbb{S}%
_{d}\mapsto \mathbb{R}$ be a given monotonic and sublinear function.
A process $\{B_{t}(\omega)\}_{t\geq0}$ in a sublinear expectation
space $(\Omega ,\mathscr{H},\mathbb{\hat{E}})$ is called a
$G$\textbf{--Brownian motion} if
for each $n\in \mathbb{N}$ and $0\leq t_{1},\cdots,t_{n}<\infty$,$\ B_{t_{1}%
},\cdots,B_{t_{n}}\in \mathscr{H}$ and the following properties are
satisfied:
\newline \textsl{(i)} $B_{0}(\omega)=0$;\newline \textsl{(ii)} For each
$t,s\geq0$, the increment $B_{t+s}-B_{t}$ is independent to $(B_{t_{1}},B_{t_{2}}%
,\cdots,B_{t_{n}})$, for each $n\in \mathbb{N}$ and $0\leq t_{1}\leq
\cdots \leq t_{n}\leq t$; \newline \textsl{(iii)} $B_{t+s}-B_{t}\sim
\sqrt{s}X$, for $s,t\geq0,$ where $X$ is $G$-normal distributed.
\end{definition}

%For a given positive integer $n$ we will denote by $(x,y)$ the scalar product
%of $x$, $y\in \mathbb{R}^{n}$ and by $\left \vert x\right \vert =(x,x)^{1/2}$ the
%Euclidean norm of $x$.

Let $\bar{\Omega}=(\mathbb{R}^{d})^{[0,\infty)}$ denote the space of
all $\mathbb{R}^{d}-$valued functions $(\bar{\omega}_{t})_{t\in
\mathbb{R}^{+}}$ and $\mathscr{B}(\bar{\Omega})$ denote the
$\sigma$-algebra generated by all
finite dimensional cylinder sets. Correspondingly, we denote by $\Omega=C_{0}%
^{d}(\mathbb{R}^{+})$ the space of all $\mathbb{R}^{d}-$valued continuous
functions $(\omega_{t})_{t\in \mathbb{R}^{+}}$, with $\omega_{0}=0$, equipped
with the distance
\[
\rho(\omega^{1},\omega^{2}):=\sum_{i=1}^{\infty}2^{-i}[(\max_{t\in \lbrack
0,i]}|\omega_{t}^{1}-\omega_{t}^{2}|)\wedge1].
\]
$\mathscr{B}(\Omega)$ denotes the $\sigma$-algebra generated by all open sets.
The corresponding canonical process $\bar{B}_{t}(\bar{\omega})=\bar{\omega
}_{t}$, (resp. $B_{t}(\omega)=\omega_{t}$) $t\in \lbrack0,\infty)$ for
$\bar{\omega}\in \bar{\Omega}$\ (resp.\ $\omega \in \Omega$). The spaces of
Lipschitzian cylinder functions on $\Omega$ and $\bar{\Omega}$ are denoted
respectively by
\[
L_{ip}(\bar{\Omega}):=\{ \varphi(\bar{B}_{t_{1}},\bar{B}_{t_{2}},\cdots,\bar
{B}_{t_{n}}):\forall n\geq1,t_{1},\cdots,t_{n}\in \lbrack0,\infty
),\forall \varphi \in C_{l.Lip}(\mathbb{R}^{d\times n})\},
\]%
\[
L_{ip}(\Omega):=\{ \varphi(B_{t_{1}},B_{t_{2}},\cdots,B_{t_{n}}):\forall
n\geq1,t_{1},\cdots,t_{n}\in \lbrack0,\infty),\forall \varphi \in C_{l.Lip}%
(\mathbb{R}^{d\times n})\}.
\]
%where $C_{l.Lip}(\mathbb{R}^{d\times n})$ denotes the space of all bounded and
%Lipschitz functions on $\mathbb{R}^{d\times n}$.

Following \cite{Peng2006a, Peng2006b}, we can construct a sublinear
expectation $\mathbb{E}$ on
$(\Omega,L_{ip}(\Omega))$, called $G$-expectation, such that $(B_{t}%
(\omega))_{t\geq0}$ is a $G$-Brownian motion. Since the natural
correspondence of $L_{ip}(\bar{\Omega})$ and $L_{ip}(\Omega)$, we
can also construct a
sublinear expectation $\mathbb{\bar{E}}$ on $(\bar{\Omega}%
,L_{ip}(\bar{\Omega}))$ such that $(\bar{B}_{t}(\bar{\omega}))_{t\geq0}$ is
also a $G$-Brownian motion. In particular, for each $0\leq s<t<\infty$,
$\mathbf{a}\in \mathbb{R}^{d}$ and $p\in \lbrack1,\infty)$,%
\begin{equation}
\mathbb{\bar{E}}[|\left(  \mathbf{a},\bar{B}_{t}-\bar{B}_{s}\right)
|^{p}]=\frac{1}{\sqrt{2\pi \sigma_{\mathbf{aa}^{T}}^{2}(t-s)}}\int_{-\infty
}^{\infty}|x|^{p}\exp \left(  \frac{-x^{2}}{2\sigma_{\mathbf{aa}^{T}}^{2}%
(t-s)}\right)  \,dx,\label{normal}%
\end{equation}
where $\sigma_{\mathbf{aa}^{T}}^{2}=2G(\mathbf{aa}^{T})$.

In \cite{Peng2006a}, \cite{Peng2006b}, \cite{Peng2007b} the space
$L_{ip}(\Omega)$ is extended to $L_{G}^{p}(\Omega)$ under the Banach
norm $\mathbb{E}[|\cdot|]$ to develop a new type of $G$-stochastic
calculus, including $G$-It\^{o}'s integrals, $G$-It\^{o}'s formula
and $G$-SDE. In \cite{DHP} a family of weakly compact probability
measures has been found to represent $\mathbb{E}$. This
representation theorem is essentially important. Indeed, through it
we were able to prove in \cite{DHP} that an element $Y$ of the
abstract Banach space $L_{G}^{p}(\Omega)$ is in fact a
quasi-continuous function $Y=Y(\omega)$ defined on $\Omega$, with
respect to the natural capacity induced by this family. The space
$L_{G}^{p}(\Omega)$ is also proved to be identified with the that
introduced in \cite{denma}. In the next section we give a very
simple and elementary proof of this representation theorem.

\section{$G$-Expectation as an upper Expectation}

A main objective of this paper is to find a weakly compact family of ($\sigma
$-additive) probability measures on $(\Omega,\mathscr{B}(\Omega))$ to
represent $G$-expectation $\mathbb{E}$. We need the following Lemmas.

\begin{lemma}
\label{le3} Let $0\leq t_{1}<t_{2}<\cdots<t_{m}<\infty$ and $\{
\varphi_{n}\}_{n=1}^{\infty} \subset C_{l.Lip}(\mathbb{R}^{d\times
m})$ satisfy $\varphi_{n}\downarrow0$. Then
$\mathbb{\bar{E}}[\varphi_{n}(\bar
{B}_{t_{1}},\bar{B}_{t_{2}},\cdots,\bar{B}_{t_{m}})]\downarrow0$.
\end{lemma}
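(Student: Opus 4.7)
The plan is to split $\varphi_n$ into a compactly supported piece plus a tail and control each by a different mechanism. Set $\xi:=(\bar B_{t_1},\ldots,\bar B_{t_m})$ and pick a cutoff $\chi_N\in C_{l.Lip}(\mathbb{R}^{d\times m})$ with $0\le\chi_N\le 1$, $\chi_N\equiv 1$ on $\{|x|\le N\}$, and $\chi_N\equiv 0$ on $\{|x|\ge N+1\}$. Since $C_{l.Lip}(\mathbb{R}^{d\times m})$ is stable under multiplication by smooth compactly supported functions, both $\varphi_n\chi_N$ and $\varphi_n(1-\chi_N)$ lie in $C_{l.Lip}(\mathbb{R}^{d\times m})$, so sub-additivity of $\mathbb{\bar E}$ gives
\[
\mathbb{\bar E}[\varphi_n(\xi)] \le \mathbb{\bar E}[\varphi_n(\xi)\chi_N(\xi)] + \mathbb{\bar E}[\varphi_n(\xi)(1-\chi_N(\xi))].
\]

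For the interior piece, each $\varphi_n$ is continuous and $\varphi_n\downarrow 0$ pointwise, so Dini's theorem gives uniform convergence to $0$ on the compact set $\{|x|\le N+1\}$ containing the support of $\chi_N$. Hence $c_n:=\sup_x \varphi_n(x)\chi_N(x) \to 0$ as $n\to\infty$, and by monotonicity together with constant preservation, $\mathbb{\bar E}[\varphi_n(\xi)\chi_N(\xi)] \le c_n \to 0$.

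For the tail piece, since $\varphi_1\in C_{l.Lip}$ there exist constants $C,k$ with $0\le\varphi_n(x)\le\varphi_1(x)\le C(1+|x|^{k+1})$ for all $n$, and for any integer $p\ge 1$ one has $\mathbf{1}_{\{|x|\ge N\}}\le |x|^p/N^p$, whence
\[
\varphi_n(x)(1-\chi_N(x)) \le C(1+|x|^{k+1})\frac{|x|^p}{N^p}.
\]
The right-hand side lies in $C_{l.Lip}(\mathbb{R}^{d\times m})$. Using the independence of the increments $\bar B_{t_j}-\bar B_{t_{j-1}}$ under $\mathbb{\bar E}$ together with the Gaussian moment formula \eqref{normal}, all polynomial moments of $\xi$ under $\mathbb{\bar E}$ are finite; set $M_p:=\mathbb{\bar E}[C(1+|\xi|^{k+1})|\xi|^p]<\infty$. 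Then $\mathbb{\bar E}[\varphi_n(\xi)(1-\chi_N(\xi))]\le M_p/N^p$ uniformly in $n$.

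Combining, $\limsup_n \mathbb{\bar E}[\varphi_n(\xi)] \le M_p/N^p$, and letting $N\to\infty$ shows this limsup is $\le 0$. Monotonicity of $\mathbb{\bar E}$ also forces $\mathbb{\bar E}[\varphi_n(\xi)]\ge 0$ and non-increasing in $n$, so the sequence decreases to $0$. The main ingredient to justify carefully is the finiteness of polynomial moments of $\xi$ under $\mathbb{\bar E}$, which is obtained by peeling off the increments one at a time and invoking \eqref{normal}; the remainder is a routine compactness/tail split.
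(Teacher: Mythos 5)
Your proposal is correct and is essentially the paper's own argument: both split $\varphi_n$ into a part supported near the origin, controlled uniformly via Dini's theorem on a compact ball, and a tail controlled by a Chebyshev-type bound $\mathbf{1}_{\{|x|>N\}}\le |x|^p/N^p$ against a fixed finite moment, then let $N\to\infty$. The only cosmetic differences are that the paper uses the pointwise inequality $\varphi_n(x)\le k_n^N+\varphi_1(x)|x|/N$ with $k_n^N=\max_{|x|\le N}\varphi_n(x)$ rather than a smooth cutoff, and it does not need your separate justification of finite moments, since $\varphi_1(x)|x|\in C_{l.Lip}(\mathbb{R}^{d\times m})$ already places $\varphi_1(X)|X|$ in the domain $L_{ip}(\bar\Omega)$ of $\mathbb{\bar{E}}$.
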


\begin{proof}
We denote by
$X=(\bar{B}_{t_{1}},\bar{B}_{t_{2}},\cdots,\bar{B}_{t_{m}})$. For
each $N>0$, it is clear that
\[
\varphi_{n}(x)\leq k^{N}_{n}+\varphi_{1}(x)\mathbf{1}_{[|x|>N]} \leq
k^{N}_{n}+\frac{\varphi_{1}(x)|x|}{N} \quad \mbox{for each} \quad
x\in \mathbb{R}^{d\times m},
\]
where $k^{N}_{n}\triangleq \max_{|x|\leq N}\varphi_{n}(x)$. Noting
that $\varphi_{1}(x)|x| \in C_{l.Lip}(\mathbb{R}^{d\times m})$, then
we have
\[
\mathbb{\bar{E}}[\varphi_{n}(X)] \leq
k^{N}_{n}+\frac{1}{N}\mathbb{\bar{E}}[\varphi_{1}(X)|X|].
\]
It follows from $\varphi_{n}\downarrow0$ that
$k^{N}_{n}\downarrow0$. Thus we have $\lim_{n\rightarrow
\infty}\mathbb{\bar{E}}[\varphi_{n}(X)]\leq
\frac{1}{N}\mathbb{\bar{E}}[\varphi_{1}(X)|X|]$. Since $N$ can be
arbitrarily large, we get
$\mathbb{\bar{E}}[\varphi_{n}(X)]\downarrow0$.
%We denote
%$M:=\sup_{x\in \mathbb{R}^{d\times m}}\varphi_{1}(x)$ and
%$L=\mathbb{\bar{E}[}|(\bar{B}_{t_{1}},\bar{B}_{t_{2}},\cdots,\bar{B}_{t_{m}%
%})|]$. For each $\varepsilon>0$, it is clear that
%\[
%\varphi_{n}(x)\leq \frac{\varepsilon}{L}|x|,\  \  \  \forall x\in \mathbb{R}%
%^{d\times m}\text{ such that}\  \text{ }|x|\geq \frac{LM}{\varepsilon}.
%\]
%Thus $\varphi_{n}(x)\leq k_{n}^{\varepsilon}+\frac{\varepsilon}{L}|x|$,
%$x\in \mathbb{R}^{d\times m}$, \ where $k_{n}^{\varepsilon}\triangleq
%\max_{|x|\leq \frac{LM}{\varepsilon}}\varphi_{n}(x)$. We have%
%\begin{align*}
%\mathbb{\bar{E}}[\varphi_{n}(\bar{B}_{t_{1}},\bar{B}_{t_{2}},\cdots,\bar
%{B}_{t_{m}})]  &  \leq \mathbb{\bar{E}}[k_{n}^{\varepsilon}+\frac{\varepsilon
%}{L}|(\bar{B}_{t_{1}},\bar{B}_{t_{2}},\cdots,\bar{B}_{t_{m}})|]\\
%&  =k_{n}^{\varepsilon}+\frac{\varepsilon}{L}\mathbb{\bar{E}}[|(\bar{B}%
%_{t_{1}},\bar{B}_{t_{2}},\cdots,\bar{B}_{t_{m}})|]=k_{n}^{\varepsilon
%}+\varepsilon.
%\end{align*}
%%We then can find a sufficiently large $n_{0}$ such that $k_{n}^{\varepsilon
%%}\leq \frac{\varepsilon}{2}$, for $n\geq n_{0}$.
%Since $\varphi_{n}\downarrow0$, we have $k_{n}^{\varepsilon}\downarrow0$. This
%implies that $\mathbb{\bar{E}}[\varphi_{n}(\bar{B}_{t_{1}},\bar{B}_{t_{2}%
%},\cdots,\bar{B}_{t_{m}})]\downarrow0$, as $n\uparrow \infty$.
\end{proof}

We denote by $\mathscr{T}:=\{
\underline{t}=(t_{1},\ldots,t_{m}):\forall m\in \mathbb{N}, 0\leq
t_{1}<t_{2}<\cdots<t_{m}<\infty \}.$

\begin{lemma}
\label{le4} Let $E$ be a finitely additive linear expectation dominated by $\mathbb{\bar{E}%
}$ on $L_{ip}(\bar{\Omega})$. Then there exists a unique probability
measure $Q$ on $(\bar{\Omega},\mathscr{B}(\bar{\Omega}))$ such that
$E[X]=E_{Q}[X]$ for each $X\in L_{ip}(\bar{\Omega})$.
\end{lemma}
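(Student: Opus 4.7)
The plan is to reduce the lemma to a finite-dimensional Daniell--Stone type extension, tuple by tuple, and then stitch these together via Kolmogorov's extension theorem.

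\textbf{Step 1: Inherit good properties for $E$.} Since $E$ is dominated by $\mathbb{\bar{E}}$ and linear, I first deduce that $E[c]=c$ for constants (from $E[c]\le \mathbb{\bar{E}}[c]=c$ and $-E[c]=E[-c]\le -c$), that $E$ is monotone (for $X\ge Y$, $-E[X-Y]=E[Y-X]\le \mathbb{\bar{E}}[Y-X]\le 0$), and that $|E[X]|\le \mathbb{\bar{E}}[|X|]$. Combining monotonicity with domination and Lemma~\ref{le3}, I conclude the crucial order-continuity property: for any $\underline{t}=(t_1,\ldots,t_m)\in\mathscr{T}$ and any $\{\varphi_n\}\subset C_{l.Lip}(\mathbb{R}^{d\times m})$ with $\varphi_n\downarrow 0$,
\[
E[\varphi_n(\bar B_{t_1},\ldots,\bar B_{t_m})]\downarrow 0.
\]

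\textbf{Step 2: Finite-dimensional measures.} For each $\underline{t}\in\mathscr{T}$ I define the positive linear functional $F_{\underline{t}}(\varphi):=E[\varphi(\bar B_{t_1},\ldots,\bar B_{t_m})]$ on the Stone vector lattice $C_{l.Lip}(\mathbb{R}^{d\times m})$, which contains constants and is stable under $\wedge$, $\vee$ and $\wedge\, 1$. Since $F_{\underline{t}}(1)=1$ and $F_{\underline{t}}$ is sequentially order-continuous at $0$ by Step~1, the Daniell--Stone theorem produces a unique probability measure $Q_{\underline{t}}$ on $\sigma(C_{l.Lip}(\mathbb{R}^{d\times m}))=\mathscr{B}(\mathbb{R}^{d\times m})$ such that $F_{\underline{t}}(\varphi)=\int \varphi\,dQ_{\underline{t}}$ for every $\varphi\in C_{l.Lip}$.

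\textbf{Step 3: Consistency and Kolmogorov extension.} If $\underline{s}\subset\underline{t}$, then for $\psi\in C_{l.Lip}$ depending only on the $\underline{s}$-coordinates, both $Q_{\underline{s}}$ and the $\underline{s}$-marginal of $Q_{\underline{t}}$ integrate $\psi$ to $E[\psi(\bar B_{s_1},\ldots)]$; by the uniqueness part of Step~2, the two Borel measures on $\mathbb{R}^{d\times\#\underline{s}}$ coincide. Kolmogorov's extension theorem on the product space $\bar\Omega=(\mathbb{R}^d)^{[0,\infty)}$ equipped with the cylinder $\sigma$-algebra $\mathscr{B}(\bar\Omega)$ then yields a unique probability measure $Q$ whose finite-dimensional marginals are the $Q_{\underline{t}}$. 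By construction $E[X]=E_Q[X]$ for every $X\in L_{ip}(\bar\Omega)$, and uniqueness of $Q$ follows because $L_{ip}(\bar\Omega)$ determines the values of $Q$ on the generating $\pi$-system of finite-dimensional cylinder sets.

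\textbf{Anticipated obstacle.} The routine inheritance and consistency arguments are easy; the real content is Step~2, i.e.\ upgrading the monotone continuity of Lemma~\ref{le3} into genuine countable additivity of a Borel measure. The subtlety is that $C_{l.Lip}(\mathbb{R}^{d\times m})$ consists of \emph{unbounded} functions with polynomial growth, so one must verify that the Stone condition $\varphi\wedge 1\in C_{l.Lip}$ holds (it does) and that $\sigma(C_{l.Lip})=\mathscr{B}(\mathbb{R}^{d\times m})$, so that the Daniell construction lands on the usual Borel $\sigma$-algebra. Once these points are checked, the rest of the proof is a standard assembly.
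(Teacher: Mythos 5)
Your proposal is correct and follows essentially the same route as the paper's proof: finite-dimensional order-continuity from Lemma \ref{le3}, Daniell--Stone to produce the marginals $Q_{\underline{t}}$, Kolmogorov's consistency theorem to assemble $Q$, and a monotone-class/$\pi$-system argument for uniqueness. The only difference is that you spell out the inheritance of monotonicity, constant preservation, and order-continuity by $E$ and the Stone-lattice technicalities for the unbounded class $C_{l.Lip}$, which the paper leaves implicit.
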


\begin{proof}
For each fixed $\underline{t}=(t_{1},\ldots,t_{m})\in \mathscr{T}$,
by Lemma \ref{le3}, for each sequence $\{
\varphi_{n}\}_{n=1}^{\infty} \subset C_{l.Lip}(\mathbb{R}^{d\times
m})$ satisfying $\varphi_{n}\downarrow0$, we have
$E[\varphi_{n}(\bar{B}_{t_{1}},\bar{B}_{t_{2}},\cdots,\bar{B}_{t_{m}%
})]\downarrow0$. By Daniell-Stone's theorem, there exists a unique
probability
measure $Q_{\underline{t}}$ on $(\mathbb{R}^{d\times m}%
,\mathscr{B}(\mathbb{R}^{d\times m}))$ such that $E_{Q_{\underline{t}}%
}[\varphi]=E[\varphi(\bar{B}_{t_{1}},\bar{B}_{t_{2}},\cdots,\bar{B}_{t_{m}}%
)]$ for each $\varphi \in C_{l.Lip}(\mathbb{R}^{d\times m})$. Thus
we get a family of finite-dimensional distributions
$\{Q_{\underline{t}}:\underline {t}\in \mathscr{T}\}$, by
Daniell-Stone's theorem, it is easy to check that
$\{Q_{\underline{t}}:\underline{t}\in \mathscr{T}\}$ is consistent,
then by Kolmogorov's consistent theorem, there exists a probability
measure $Q$ on
$(\bar{\Omega},\mathscr{B}(\bar{\Omega}))$ such that $\{Q_{\underline{t}%
}:\underline{t}\in \mathscr{T}\}$ is the finite-dimensional
distributions of $Q$. Assume there exists another probability
measure $\bar{Q}$ satisfying the condition, by Daniell-Stone's
theorem, $Q$ and $\bar{Q}$ have the same finite-dimensional
distributions, then by monotone class theorem, $Q=\bar{Q}$. The
proof is complete.
\end{proof}

\begin{lemma}
\label{le5} There exists a family of probability measures $\mathscr{P}_{\!
\!e}$ on $(\bar{\Omega},\mathscr{B}(\bar{\Omega}))$ such that
\[
\mathbb{\bar{E}}[X]=\max_{Q\in \mathscr{P}_{\! \!e}}E_{Q}[X],\quad \forall X\in
L_{ip}(\bar{\Omega}).
\]

\end{lemma}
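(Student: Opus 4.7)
The plan is to combine the abstract representation result, Lemma \ref{le0}, with the countable-additivity upgrade provided by Lemma \ref{le4}. Viewing $\mathbb{\bar{E}}$ as a sublinear expectation on the linear space $L_{ip}(\bar{\Omega})$ (which satisfies (a)--(d)), Lemma \ref{le0} produces a family $\{E_{\theta}:\theta\in\Theta\}$ of linear functionals on $L_{ip}(\bar{\Omega})$ such that
\[
\mathbb{\bar{E}}[X]=\sup_{\theta\in\Theta}E_{\theta}[X],\qquad X\in L_{ip}(\bar{\Omega}),
\]
with the supremum attained at some $\theta\in\Theta$ for each individual $X$, and such that each $E_{\theta}$ inherits monotonicity and constant preservation from $\mathbb{\bar{E}}$. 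In particular each $E_{\theta}$ is a finitely additive linear expectation on $L_{ip}(\bar{\Omega})$.

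Next I would check that every $E_{\theta}$ is dominated by $\mathbb{\bar{E}}$ on $L_{ip}(\bar{\Omega})$. Indeed, by linearity of $E_{\theta}$ and sub-additivity of $\mathbb{\bar{E}}$,
\[
E_{\theta}[X]-E_{\theta}[Y]=E_{\theta}[X-Y]\leq \mathbb{\bar{E}}[X-Y],\qquad X,Y\in L_{ip}(\bar{\Omega}),
\]
which is exactly the domination hypothesis needed to invoke Lemma \ref{le4}. This step is conceptually trivial but logically essential: without it there is no hope of passing from finite to countable additivity.

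For each $\theta$, Lemma \ref{le4} then yields a unique probability measure $Q_{\theta}$ on $(\bar{\Omega},\mathscr{B}(\bar{\Omega}))$ with $E_{\theta}[X]=E_{Q_{\theta}}[X]$ for all $X\in L_{ip}(\bar{\Omega})$. Setting $\mathscr{P}_{\!\!e}:=\{Q_{\theta}:\theta\in\Theta\}$ gives
\[
\mathbb{\bar{E}}[X]=\sup_{\theta\in\Theta}E_{\theta}[X]=\sup_{Q\in\mathscr{P}_{\!\!e}}E_{Q}[X],\qquad X\in L_{ip}(\bar{\Omega}),
\]
and the supremum is in fact a maximum because Lemma \ref{le0} already guarantees attainment at some $\theta$, which corresponds to the measure $Q_{\theta}\in\mathscr{P}_{\!\!e}$.

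The main obstacle is not in this assembly itself but in the hypothesis it leans on: the $\sigma$-additivity of each $Q_{\theta}$ has already been pushed into Lemma \ref{le4} via the Daniell--Stone downward-continuity provided by Lemma \ref{le3}. Once that is in hand, the present lemma is essentially bookkeeping: pair each Hahn--Banach functional $E_{\theta}$ with its Kolmogorov extension $Q_{\theta}$. Note that at this stage we have \emph{not} yet asserted weak compactness of $\mathscr{P}_{\!\!e}$; that refinement will be obtained in subsequent arguments, presumably via tightness on $\Omega=C_{0}^{d}(\mathbb{R}^{+})$ using the Gaussian-type moment bound in \eqref{normal}.
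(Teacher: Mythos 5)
Your proposal is correct and follows exactly the route the paper intends: the paper's proof of this lemma is the one-line remark that it follows from Lemma \ref{le0} and Lemma \ref{le4}, and you have simply filled in the (correct) details --- domination of each $E_{\theta}$ by $\mathbb{\bar{E}}$ via sub-additivity, the application of Lemma \ref{le4} to each $E_{\theta}$, and attainment of the supremum from the attainment clause in Lemma \ref{le0}. No further comment is needed.
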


\begin{proof}
By Lemma \ref{le0} and Lemma \ref{le4}, it is easy to get the
result.
\end{proof}

\  \  \

For this $\mathscr{P}_{\! \!e}$, we define the associated capacity
\[
\tilde{c}(A):=\sup_{Q\in \mathscr{P}_{\! \!e}}Q(A),\quad A\in \mathscr{B}(\bar
{\Omega}).
\]
and upper expectation for each
$\mathscr{B}(\bar{\Omega})$-measurable real function $X$ which makes
the following definition meaningful,
\[
\mathbb{\tilde{E}}[X]:=\sup_{Q\in \mathscr{P}_{\! \!e}}E_{Q}[X].
\]

\begin{lemma}
\label{le6} For $\bar{B}=\{ \bar{B}_{t}:t\in \lbrack0,\infty)\}$ , there exists
a continuous modification $\tilde{B}=\{ \tilde{B}_{t}:t\in \lbrack0,\infty)\}$
of $\bar{B}$ (i.e. $\tilde{c} (\{ \tilde{B}_{t}\not =\bar{B}_{t}\})=0$, for
each $t\geq0$) such that $\tilde{B}_{0}=0$.
\end{lemma}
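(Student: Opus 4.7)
The plan is to adapt the classical Kolmogorov--Chentsov continuity criterion to the capacity setting, using $\tilde c$ in place of a single probability measure. The engine is the Gaussian moment estimate (\ref{normal}): taking $\mathbf a=\mathbf e_i$ and summing over coordinates gives a constant $C_p$ depending only on $p$ and $G$ with
$$\mathbb{\bar E}\bigl[|\bar B_t-\bar B_s|^p\bigr]\le C_p|t-s|^{p/2},\qquad p\ge 1.$$
Since $\mathbb{\bar E}[X]=\sup_{Q\in\mathscr{P}_{\!\!e}}E_Q[X]$ on $L_{ip}(\bar\Omega)$ by Lemma \ref{le5}, this bound holds \emph{simultaneously} under every $Q\in\mathscr{P}_{\!\!e}$.

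Fix $T\in\mathbb N$, choose $p$ with $p/2>1$ and $\beta\in(0,\tfrac12-\tfrac1p)$, and write $\Delta^n_k\bar B:=\bar B_{(k+1)2^{-n}}-\bar B_{k2^{-n}}$ for $0\le k<T2^n$. Markov's inequality under each $Q$ followed by a union bound over $k$ gives
$$\tilde c\Bigl(\max_{0\le k<T2^n}|\Delta^n_k\bar B|>2^{-n\beta}\Bigr)\le T2^n\cdot C_p\cdot 2^{-np/2}\cdot 2^{n\beta p}=TC_p\,2^{n(1+\beta p-p/2)}.$$
The exponent is negative, so the right-hand side is summable in $n$. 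Since $\tilde c$ is countably subadditive as a supremum of countably additive measures, Borel--Cantelli produces a $\tilde c$-null set $N_T\subset\bar\Omega$ outside which, for all sufficiently large $n$, the dyadic increments satisfy this H\"older-type bound; a standard chaining argument then makes $\bar\omega$ restricted to the dyadic rationals in $[0,T]$ uniformly $\beta$-H\"older continuous.

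On $\bar\Omega\setminus N_T$ this restriction extends uniquely to a continuous path on $[0,T]$; call it $\tilde B^T(\bar\omega)$, and set $\tilde B^T(\bar\omega)\equiv 0$ on $N_T$. By uniqueness of the extension, the family $\{\tilde B^T\}_{T\in\mathbb N}$ agrees on overlaps off the $\tilde c$-null set $N:=\bigcup_T N_T$, yielding a single continuous process $\tilde B$ on $[0,\infty)$ with $\tilde B_0=0$. For the modification property: at dyadic $t$ we have $\tilde B_t=\bar B_t$ off $N$ by construction; for general $t$, choose dyadics $t_k\to t$ and note that the moment bound forces $\bar B_{t_k}\to\bar B_t$ in $L^p(Q)$, hence in $Q$-probability, for every $Q\in\mathscr{P}_{\!\!e}$, while $\tilde B_{t_k}\to\tilde B_t$ pointwise off $N$. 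Therefore $\tilde B_t=\bar B_t$ $Q$-a.s.\ for every $Q$, i.e.\ $\tilde c(\tilde B_t\neq\bar B_t)=0$.

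The main obstacle is obtaining H\"older regularity uniformly over the whole family $\mathscr{P}_{\!\!e}$. This is precisely what the representation $\mathbb{\bar E}=\sup_Q E_Q$ on $L_{ip}(\bar\Omega)$ buys us: the worst-case Gaussian moment bound feeds directly into $\tilde c$, and countable subadditivity of $\tilde c$ lets us run Borel--Cantelli \emph{once} and obtain $\tilde c$-quasi-sure H\"older continuity on dyadics, after which the classical Kolmogorov construction goes through verbatim.
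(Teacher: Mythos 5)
Your argument is correct and is essentially the paper's approach: the paper uses the same moment estimate $\mathbb{\bar{E}}[|\bar{B}_{t}-\bar{B}_{s}|^{4}]=d|t-s|^{2}$ (your bound with $p=4$) and then simply cites the generalized Kolmogorov continuity criterion with respect to capacity (Theorem 31 of [DHP]), which is exactly the Markov--union-bound--Borel--Cantelli--chaining argument you carry out explicitly using countable subadditivity of $\tilde{c}$. The only detail you gloss over is $\tilde{B}_{0}=0$: off your null set $N$ you have $\tilde{B}_{0}=\bar{B}_{0}$, which need not vanish pointwise, so you should enlarge $N$ by the $\tilde{c}$-null set $\{\bar{B}_{0}\neq 0\}$ (as the paper does) before setting $\tilde{B}\equiv 0$ on the exceptional set.
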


\begin{proof}
By Lemma \ref{le5}, we know that $\mathbb{\bar{E}}=\mathbb{\tilde{E}}$ on
$L_{ip}(\bar{\Omega})$, from (\ref{normal}) we get
\[
\mathbb{\tilde{E}}[|\bar{B}_{t}-\bar{B}_{s}|^{4}]=\mathbb{\bar{E}}[|\bar{B}%
_{t}-\bar{B}_{s}|^{4}]=d|t-s|^{2},\forall s,t\in \lbrack0,\infty),
\]
where $d$ is a constant depending only on $G$. By generalized
Kolmogorov's criterion for continuous modification with respect to
capacity (see Theorem 31 in [DHP]), there exists a continuous
modification $\tilde{B}$ of $\bar{B}$. Since $\tilde{c}(\{
\bar{B}_{0}\not =0\})=0$, we can set $\tilde{B}_{0}=0$. The proof is
complete.
\end{proof}

\  \  \

For each $Q\in \mathscr{P}_{\! \!e}$, let $Q\circ \tilde{B}^{-1}$ denote the
probability measure on $(\Omega,\mathscr{B}(\Omega))$ induced by $\tilde{B}$
with respect to $Q$. We denote by $\mathscr{P}_{1}=\{Q\circ \tilde{B}^{-1}%
:Q\in \mathscr{P}_{\! \!e}\}$. By Lemma \ref{le6}, we get
\[
\mathbb{\tilde{E}}[|\tilde{B}_{t}-\tilde{B}_{s}|^{4}]=\mathbb{\tilde{E}}[|\bar{B}_{t}-\bar{B}_{s}|^{4}]=
d|t-s|^{2},\forall s,t\in \lbrack0,\infty).
\]
Applying the well-known result of moment criterion for tightness of
Kolmogorov-Chentrov's type, we conclude that $\mathscr{P}_{1}$ is
tight. We denote by $\mathscr{P}=\overline{\mathscr{P}}_{1}$ the
closure of $\mathscr{P}_{1}$ under the topology of weak convergence,
then $\mathscr{P}$ is weakly compact.

Now, we give the representation of $G$-expectation.

\begin{theorem}\label{th7} For each continuous monotonic and sublinear function $G:\mathbb{S}_d\mapsto \mathbb{R}$, let $\mathbb{E}$ be the corresponding
$G$-expectation on $(\Omega,L_{ip}(\Omega))$. Then there exists a
weakly compact family of probability measures $\mathscr{P}$ on
$(\Omega,\mathscr{B}(\Omega))$ such that

\[
\mathbb{E}[X]=\max_{P\in \mathscr{P}}E_{P}[X],\quad \forall X \in L_{ip}%
(\Omega).
\]

\end{theorem}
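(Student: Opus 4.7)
The plan is to assemble the pieces already established into a short chain of identifications. Given any $X=\varphi(B_{t_{1}},\ldots,B_{t_{n}})\in L_{ip}(\Omega)$, first I would translate to the product space $\bar{\Omega}$ via the natural correspondence $L_{ip}(\Omega)\cong L_{ip}(\bar{\Omega})$ under which $\mathbb{E}$ and $\bar{\mathbb{E}}$ are defined: setting $\bar X:=\varphi(\bar B_{t_{1}},\ldots,\bar B_{t_{n}})$, we have $\mathbb{E}[X]=\bar{\mathbb{E}}[\bar X]$. Lemma \ref{le5} then supplies a $Q^{\ast}\in\mathscr{P}_{\!\!e}$ with $\bar{\mathbb{E}}[\bar X]=E_{Q^{\ast}}[\bar X]$.

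Next I would push each $Q\in\mathscr{P}_{\!\!e}$ forward along the continuous modification $\tilde{B}$ from Lemma \ref{le6} to land in $(\Omega,\mathscr{B}(\Omega))$. Because $\tilde{B}_{t_{i}}=\bar{B}_{t_{i}}$ outside a $\tilde{c}$-null (hence $Q$-null) set, continuity of $\varphi$ gives
\[
E_{Q}[\varphi(\bar B_{t_{1}},\ldots,\bar B_{t_{n}})]=E_{Q}[\varphi(\tilde B_{t_{1}},\ldots,\tilde B_{t_{n}})]=E_{P}[\varphi(B_{t_{1}},\ldots,B_{t_{n}})]=E_{P}[X],
\]
where $P:=Q\circ\tilde{B}^{-1}\in\mathscr{P}_{1}$. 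Taking $Q=Q^{\ast}$ produces a $P^{\ast}\in\mathscr{P}_{1}\subseteq\mathscr{P}$ with $E_{P^{\ast}}[X]=\mathbb{E}[X]$, so the maximum on the right-hand side is attained whenever the sup over $\mathscr{P}$ does not exceed $\mathbb{E}[X]$.

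The remaining point is the reverse inequality $E_{P}[X]\leq\mathbb{E}[X]$ for every $P\in\mathscr{P}=\overline{\mathscr{P}}_{1}$. For bounded continuous cylinder $X$ this is immediate from weak convergence of an approximating sequence $P_{n}\in\mathscr{P}_{1}$, $P_{n}\Rightarrow P$, together with $E_{P_{n}}[X]\leq\mathbb{E}[X]$. I expect the main obstacle to be the polynomial growth of $\varphi\in C_{l.Lip}(\mathbb{R}^{d\times n})$: to pass to the limit I would use the uniform moment bound $\sup_{P'\in\mathscr{P}_{1}}E_{P'}[|B_{t}|^{k}]\leq\bar{\mathbb{E}}[|\bar B_{t}|^{k}]<\infty$, finite for every $k$ by the Gaussian-type formula (\ref{normal}), to deduce uniform integrability of $X$ under $\{P_{n}\}$ and hence $E_{P}[X]=\lim_{n}E_{P_{n}}[X]\leq\mathbb{E}[X]$.

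Combining these three steps yields $\mathbb{E}[X]=\max_{P\in\mathscr{P}}E_{P}[X]$, and the weak compactness of $\mathscr{P}$ has already been recorded before the theorem statement. Apart from the moment/uniform-integrability technicality, the proof is essentially a compactification of Lemmas \ref{le5} and \ref{le6} via the Kolmogorov-Chentsov tightness argument; the substantive work is behind us.
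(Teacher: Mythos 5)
Your proposal is correct and follows essentially the same route as the paper: attainment of $\mathbb{E}[X]$ over $\mathscr{P}_{1}$ via Lemmas \ref{le5} and \ref{le6} (pushing the maximizing $Q$ forward along $\tilde{B}$), followed by passage to the weak closure $\mathscr{P}=\overline{\mathscr{P}}_{1}$. The only, immaterial, difference is how the polynomial growth of $\varphi$ is handled in the limit: the paper truncates, using Lemma \ref{le3} to get $\mathbb{E}[|X-(X\wedge N)\vee(-N)|]\downarrow 0$, whereas you derive uniform integrability from the uniform moment bounds coming from (\ref{normal}); both arguments close the gap.
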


\begin{proof}
By Lemma \ref{le5} and Lemma \ref{le6}, we have
\[
\mathbb{E}[X]=\max_{P\in \mathscr{P}_{1}}E_{P}[X],\quad \forall X\in
L_{ip}(\Omega).
\]
For each $X\in L_{ip}(\Omega)$, by Lemma \ref{le3}, we get
$\mathbb{E}[|X-(X\wedge N)\vee (-N)|]\downarrow 0$ as $N\rightarrow
\infty$. Noting also that $\mathscr{P}=\overline{\mathscr{P}}_{1}$,
then by the definition of weak convergence, we get the result.
\end{proof}

\section{Completion of $L_{ip}(\Omega)$}

We denote by $L^{0}(\Omega)$ the space of all
$\mathscr{B}(\Omega)$-measurable real functions and $C_{b}(\Omega)$
all bounded continuous functions. In section 3, we obtain a weakly
compact family $\mathscr{P}$ of probability measures on
$(\Omega,\mathscr{B}(\Omega))$ to represent $G$-expectation
$\mathbb{E}$. For this $\mathscr{P}$, we define the associated
capacity
\[
\hat{c}(A):=\sup_{P\in \mathscr{P}}P(A),\quad A\in \mathscr{B}(\Omega).
\]
and upper expectation for each $X\in L^{0}(\Omega)$ which makes the
following definition meaningful,
\[
\mathbb{\hat{E}}[X]:=\sup_{P\in \mathscr{P}}E_{P}[X].
\]
By Theorem \ref{th7}, we know that $\mathbb{\hat{E}}=\mathbb{E}$ on
$L_{ip}(\Omega)$, thus the $\mathbb{E}[|\cdot|]$-completion and the
$\mathbb{\hat{E}}[|\cdot|]$-completion of $L_{ip}(\Omega)$ are the same. We
also denote, for $p>0$,

\begin{itemize}
\item $\mathscr{L}^{p}:=\{X\in L^{0}(\Omega):\mathbb{\hat{E}}[|X|^{p}%
]=\sup_{P\in \mathscr{P}}E_{P}[|X|^{p}]<\infty \}$;\

\item $\mathscr{N}^{p}:=\{X\in L^{0}(\Omega):\mathbb{\hat{E}}[|X|^{p}]=0\}$;

\item $\mathscr{N}:=\{X\in L^{0}(\Omega):X=0$, $\hat{c}$-q.s.$\}$.
\end{itemize}

It is seen that $\mathscr{L}^{p}$ and $\mathscr{N}^{p}$ are linear spaces and
$\mathscr{N}^{p}=\mathscr{N}$, for each $p>0$.

We denote by $\mathbb{L}^{p}:=\mathscr{L}^{p}/\mathscr{N}$. As
usual, we do not take care about the distinction between classes and
their representatives.

Now, we give the following two Propositions which can be found in [DHP].

\begin{proposition}
\label{pr8} For each $\{X_{n} \}_{n=1}^{\infty}$ in $C_{b}(\Omega)$ such that
$X_{n}\downarrow0$ \ on $\Omega$, we have $\mathbb{\hat{E}} [X_{n}%
]\downarrow0$.
\end{proposition}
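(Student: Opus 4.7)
The plan is to reduce the claim to a standard Dini-type argument by viewing the quantities $\hat{\mathbb{E}}[X_n]$ as the suprema of a monotone sequence of continuous functions on the weakly compact set $\mathscr{P}$. Concretely, I would define, for each $n$, the map
\[
f_n: \mathscr{P} \to \mathbb{R}, \qquad f_n(P) := E_P[X_n],
\]
and show that (i) each $f_n$ is continuous for the topology of weak convergence, (ii) the sequence $(f_n)$ is monotone nonincreasing, and (iii) $f_n(P) \downarrow 0$ for every fixed $P \in \mathscr{P}$; Dini's theorem on the compact space $\mathscr{P}$ then yields uniform convergence, hence $\sup_{P \in \mathscr{P}} f_n(P) = \hat{\mathbb{E}}[X_n] \downarrow 0$.

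For step (i), continuity of $f_n$ is immediate from the fact that $X_n \in C_b(\Omega)$: the weak topology on probability measures is exactly the one making $P \mapsto E_P[X]$ continuous for every bounded continuous $X$. Step (ii) is clear since $X_n \downarrow$ pointwise forces $E_P[X_n]$ to be nonincreasing in $n$ for each fixed $P$. For step (iii), since $0 \leq X_n \leq X_1$ and $X_1$ is bounded, the classical dominated convergence theorem under the single probability measure $P$ gives $E_P[X_n] \downarrow 0$.

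With (i)--(iii) in hand, invoke Dini's theorem: a monotone sequence of continuous real-valued functions on a compact space that converges pointwise to a continuous limit converges uniformly. Here the limit is $0$, the space $\mathscr{P}$ is weakly compact by the construction in Section~3, and $f_n$ is continuous and monotone, so the convergence is uniform over $\mathscr{P}$. Taking suprema gives the conclusion.

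The main conceptual obstacle is ensuring that one can really apply Dini on $\mathscr{P}$ rather than on, say, $\mathscr{P}_1$ or $\mathscr{P}_{\!e}$: weak compactness of $\mathscr{P}$ is essential because these families in general are not countable and pointwise DCT alone does not deliver uniformity. A minor technical point is that the weak topology on a set of probability measures over the Polish space $\Omega$ is metrizable, which makes the use of sequential compactness and Dini's theorem fully rigorous; this is already implicit in the weak compactness statement proved in Section~3. No new growth estimates on $X_n$ are needed because boundedness of $X_1$ already dominates the whole sequence.
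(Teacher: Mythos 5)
Your proof is correct. Note that the paper itself does not prove Proposition \ref{pr8}: it simply cites [DHP], where the statement is established by a contradiction argument --- assuming $\mathbb{\hat{E}}[X_n]\downarrow \delta>0$, extracting near-maximizing measures $P_n\in\mathscr{P}$, passing to a weakly convergent subsequence $P_{n_k}\to P$, and using $E_{P_{n_k}}[X_m]\geq E_{P_{n_k}}[X_{n_k}]$ for $n_k\geq m$ together with dominated convergence under $P$ to reach a contradiction. Your Dini formulation is a clean repackaging of exactly the same mechanism: both arguments rest entirely on the weak compactness of $\mathscr{P}$ established in Section 3, on the continuity of $P\mapsto E_P[X_n]$ for the weak topology (which is the defining property of that topology since $X_n\in C_b(\Omega)$), and on pointwise dominated convergence for each fixed $P$. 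Your three steps (i)--(iii) are all justified, and the limit function $0$ is continuous, so Dini applies; in fact Dini's theorem holds on any compact topological space via the open cover $\{P: E_P[X_n]<\varepsilon\}$, so the metrizability remark, while true for measures on the Polish space $\Omega$, is not even needed. The only point worth being explicit about is that you are taking the supremum over $\mathscr{P}$ (not $\mathscr{P}_1$ or $\mathscr{P}_{\!\!e}$), which is indeed how $\mathbb{\hat{E}}$ is defined in Section 4, and it is precisely $\mathscr{P}$, not $\mathscr{P}_1$, that is known to be compact --- you handle this correctly.
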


\begin{proposition}
\label{pr9} We have

\begin{enumerate}
\item For each $p\geq1$, $\mathbb{L}^{p}$ is a Banach space under the norm
$\left \Vert X\right \Vert _{p}:=\left(  \mathbb{\hat{E} }[|X|^{p}]\right)
^{\frac{1}{p}}$.

\item For each $p<1$, $\mathbb{L}^{p}$ is a complete metric space under the
distance \newline$d(X,Y):= \mathbb{\hat{E} }[|X-Y|^{p}] $.
\end{enumerate}
\end{proposition}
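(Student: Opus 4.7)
The plan is to establish the norm (resp.\ metric) axioms and then to complete the space by the standard absolutely-convergent-series argument. The enabling tools are the Minkowski inequality (for $p\geq 1$), the elementary inequality $(a+b)^p\leq a^p+b^p$ for $a,b\geq 0$ and $0<p<1$, Fatou's lemma, and a Markov-type estimate for the capacity $\hat{c}$. All of these are inherited from their classical versions by taking the supremum over $P\in\mathscr{P}$ in the identity $\hat{\mathbb{E}}=\sup_{P\in\mathscr{P}}E_P$.

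For part 1, I would first check that $\|\cdot\|_p$ is a norm on $\mathbb{L}^p$. Positive definiteness is built into the quotient $\mathbb{L}^p=\mathscr{L}^p/\mathscr{N}$ since $\mathscr{N}^p=\mathscr{N}$; positive homogeneity follows from that of $\hat{\mathbb{E}}$. The triangle inequality reduces to Minkowski under each $P\in\mathscr{P}$: $(E_P[|X+Y|^p])^{1/p}\leq \|X\|_p+\|Y\|_p$, and taking $\sup_P$ on the left gives $\|X+Y\|_p\leq \|X\|_p+\|Y\|_p$. For completeness, I would extract from a Cauchy sequence $\{X_n\}$ a subsequence with $\|X_{n_{k+1}}-X_{n_k}\|_p\leq 2^{-k}$, and set $Z_m=\sum_{k=1}^m |X_{n_{k+1}}-X_{n_k}|$. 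Minkowski gives $\|Z_m\|_p\leq 1$; since $Z_m^p$ increases pointwise to $Z^p$ where $Z:=\lim_m Z_m\in[0,\infty]$, Fatou (applied under each $P$ and then $\sup_P$) yields $\hat{\mathbb{E}}[Z^p]\leq 1$. A Chebyshev-type bound $\hat{c}(\{Z>N\})\leq N^{-p}\hat{\mathbb{E}}[Z^p]$ then forces $\hat{c}(\{Z=\infty\})=0$, so $\{X_{n_k}(\omega)\}$ is Cauchy in $\mathbb{R}$ for quasi-every $\omega$. Define $X$ as the $\hat{c}$-q.s.\ pointwise limit (and $X=0$ on the null set); applying Fatou to $|X_{n_j}-X_{n_k}|^p$ as $j\to\infty$ gives $\|X_{n_k}-X\|_p\leq 2^{-k+1}$, and combining with the Cauchy property for the full sequence yields $\|X_n-X\|_p\to 0$ and $X\in\mathscr{L}^p$.

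For part 2, only the triangle inequality changes. Using $(a+b)^p\leq a^p+b^p$ we have $|X-Y|^p\leq |X-Z|^p+|Z-Y|^p$ pointwise, so subadditivity of $\hat{\mathbb{E}}$ yields $d(X,Y)\leq d(X,Z)+d(Z,Y)$; symmetry and the identity of indiscernibles in $\mathbb{L}^p$ are immediate. Completeness follows by the same subsequence/Fatou argument: extract $n_k$ with $d(X_{n_{k+1}},X_{n_k})\leq 2^{-k}$, form $Z_m$ as above, and use $Z_m^p\leq\sum_{k=1}^m |X_{n_{k+1}}-X_{n_k}|^p$ (valid for $p<1$) to get $\hat{\mathbb{E}}[Z_m^p]\leq\sum 2^{-k}\leq 1$, then conclude q.s.\ convergence and pass to the limit by Fatou.

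The main obstacle is ensuring the q.s.\ limit is a well-defined element of $\mathbb{L}^p$, which depends on Fatou's lemma and the Chebyshev inequality under $\hat{\mathbb{E}}$. Both hold by the simple observation that $\hat{\mathbb{E}}$ is a supremum of $\sigma$-additive expectations, so the weak compactness of $\mathscr{P}$ plays no role for this proposition---it becomes essential only for the refinement of $\mathbb{L}^p$ to its quasi-continuous representatives alluded to just after the statement.
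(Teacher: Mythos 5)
Your proof is correct. The paper itself gives no proof of this proposition---it simply cites [DHP]---and your argument (Minkowski, respectively the inequality $(a+b)^p\leq a^p+b^p$, for the norm/metric axioms, followed by the Riesz--Fischer subsequence argument using Fatou and a Chebyshev bound under each $P\in\mathscr{P}$ and then taking suprema) is exactly the standard one carried out there; your closing observation that only $\sigma$-additivity of each $P$, and not weak compactness of $\mathscr{P}$, is needed here is also accurate.
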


With respect to the distance defined on $\mathbb{L}^{p}$, $p>0$, we denote:

\begin{itemize}
\item $\mathbb{L}_{c}^{p}$ the completion of $C_{b}(\Omega)$.

\item $L_{G}^{p}(\Omega)$ the completion of $L_{ip}(\Omega)$.
\end{itemize}

For each $T>0$, we also denote by $\Omega_{T}=C_{0}^{d}([0,T])$
equipped with the distance
\[
\rho(\omega^{1},\omega^{2})=\left \Vert \omega^{1}-\omega^{2} \right \Vert
_{C_{0}^{d}([0,T])}:=\max_{0\leq t\leq T}|\omega^{1}_{t}-\omega^{2}_{t}|.
\]

We now prove that $L_{G}^{1}(\Omega)=\mathbb{L}_{c}^{1}$. First, we need the
following classical approximation Lemma.

\begin{lemma}
\label{le10} For each $X\in C_{b}(\Omega)$ and $n=1,2,\cdots$, we denote
\[
X^{(n)}(\omega)\triangleq \inf_{\omega^{\prime}\in \Omega}\{X(\omega^{\prime
})+n\left \Vert \omega-\omega^{\prime}\right \Vert _{C_{0}^{d}([0,n])}\},
\quad \forall \omega \in \Omega.
\]
Then the sequence $\{X^{(n)}\}_{n=1}^{\infty}$ satisfies:

\begin{enumerate}
\item $-M\leq X^{(n)}\leq X^{(n+1)}\leq \cdots \leq X$, $M=\sup_{\omega \in
\Omega}|X(\omega)|.$\

\item $|X^{(n)}(\omega_{1})-X^{(n)}(\omega_{2})|\leq n\left \Vert \omega
_{1}-\omega_{2}\right \Vert _{C_{0}^{d}([0,n])},\  \  \forall \omega_{1}%
,\omega_{2}\in \Omega.$

\item $X^{(n)}(\omega)\uparrow X(\omega),\  \  \forall \omega \in \Omega.$
\end{enumerate}

%where $\varphi$ is an increasing function $\mathbb{R}_{+}\mapsto \mathbb{R}%
%_{+}$ satisfying $\varphi(0+)=0$.

\end{lemma}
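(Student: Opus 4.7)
The three properties follow from standard inf-convolution (Moreau–Yosida) arguments, so the proposal is to check each one in turn, with the only subtle point being part~3.

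For part~1, the bound $X^{(n)}\le X$ is obtained by testing with $\omega'=\omega$, and $X^{(n)}\ge -M$ follows because every term under the infimum is at least $X(\omega')\ge -M$. For the monotonicity $X^{(n)}\le X^{(n+1)}$, I would use the two-fold fact that the sup-norm on $[0,n]$ is dominated by the sup-norm on $[0,n+1]$ and that $n\le n+1$; hence for every fixed $\omega'$,
\[
X(\omega')+n\Vert\omega-\omega'\Vert_{C_0^d([0,n])}
\le X(\omega')+(n+1)\Vert\omega-\omega'\Vert_{C_0^d([0,n+1])},
\]
and taking the infimum on both sides gives the inequality.

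For part~2, the plan is the usual Lipschitz-from-inf-convolution argument: the triangle inequality on the sup-norm gives, for any $\omega'\in\Omega$,
\[
X(\omega')+n\Vert\omega_1-\omega'\Vert_{C_0^d([0,n])}
\le X(\omega')+n\Vert\omega_2-\omega'\Vert_{C_0^d([0,n])}+n\Vert\omega_1-\omega_2\Vert_{C_0^d([0,n])},
\]
and taking the infimum over $\omega'$ followed by a symmetric argument yields the stated Lipschitz estimate.

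Part~3 is the only step with any content. By part~1, the limit $L(\omega):=\lim_n X^{(n)}(\omega)$ exists and satisfies $L(\omega)\le X(\omega)$, so I only need $L(\omega)\ge X(\omega)$. The strategy is to pick a near-optimizer: choose $\omega_n\in\Omega$ with
\[
X(\omega_n)+n\Vert\omega-\omega_n\Vert_{C_0^d([0,n])}\le X^{(n)}(\omega)+\tfrac{1}{n}.
\]
Using $|X|\le M$ and $X^{(n)}(\omega)\le M$, this forces $\Vert\omega-\omega_n\Vert_{C_0^d([0,n])}\le(2M+\tfrac{1}{n})/n\to 0$. The key step, which is the main (mild) obstacle, is to upgrade this local convergence on $[0,n]$ to convergence $\omega_n\to\omega$ in the global metric $\rho$: for any fixed $i$, once $n\ge i$ we have $\max_{t\in[0,i]}|\omega_t-(\omega_n)_t|\le\Vert\omega-\omega_n\Vert_{C_0^d([0,n])}\to 0$, and plugging this into the series defining $\rho$ and using dominated convergence on the tail gives $\rho(\omega_n,\omega)\to 0$. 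Since $X\in C_b(\Omega)$ is continuous with respect to $\rho$, we conclude $X(\omega_n)\to X(\omega)$, and letting $n\to\infty$ in $X^{(n)}(\omega)+\tfrac{1}{n}\ge X(\omega_n)$ yields $L(\omega)\ge X(\omega)$, finishing the proof.
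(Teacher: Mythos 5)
Your proposal is correct and follows essentially the same route as the paper: the same inf-convolution observations for parts 1 and 2, and for part 3 the same choice of a near-optimizer $\omega_n$ with $n\Vert\omega-\omega_n\Vert_{C_0^d([0,n])}\leq 2M+1$, followed by continuity of $X$ to get $X(\omega_n)\rightarrow X(\omega)$. The only (harmless) differences are that you conclude via a one-sided liminf argument plus monotonicity where the paper derives a two-sided quantitative bound, and that you spell out the passage from uniform convergence on $[0,n]$ to convergence in the metric $\rho$, which the paper leaves implicit.
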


\begin{proof}
\textbf{1.}is obvious.

For \textbf{2.} We have\textbf{ }
\[%
\begin{array}
[c]{l}%
\displaystyle X^{(n)}(\omega_{1})-X^{(n)}(\omega_{2})\\
\displaystyle \qquad \leq \sup_{\omega^{\prime}\in \Omega}\{[X(\omega^{\prime
})+n\left \Vert \omega_{1}-\omega^{\prime}\right \Vert _{C_{0}^{d}%
([0,n])}]-[X(\omega^{\prime})+n\left \Vert \omega_{2}-\omega^{\prime
}\right \Vert _{C_{0}^{d}([0,n])}]\} \\
\displaystyle \qquad \leq n\left \Vert \omega_{1}-\omega_{2}\right \Vert
_{C_{0}^{d}([0,n])}%
\end{array}
\]
and, symmetrically, $X^{(n)}(\omega_{2})-X^{(n)}(\omega_{1})\leq n\left \Vert
\omega_{1}-\omega_{2}\right \Vert _{C_{0}^{d}([0,n])}$. Thus \textbf{2} follows.

We now prove \textbf{3}.
%Since $X\in C_{b}(\Omega)$ there is an increasing
%function $\varphi:\mathbb{R}_{+}\mapsto \mathbb{R}_{+}$ such that
%\[
%|X(\omega)-X(\omega^{\prime})|\leq \varphi(\left \Vert \omega-\omega^{\prime
%}\right \Vert _{C([0,T])}),\  \  \  \forall \omega,\omega^{\prime}\in \Omega.
%\]
For each fixed $\omega \in \Omega$, let $\omega_{n}\in \Omega$ be such that
\[
X(\omega_{n})+n\left \Vert \omega-\omega_{n}\right \Vert _{C_{0}^{d}([0,n])}\leq
X^{(n)}(\omega)+\frac{1}{n}.
\]
It is clear that $n\left \Vert \omega-\omega_{n}\right \Vert _{C_{0}^{d}%
([0,n])}\leq2M+1$, or $\left \Vert \omega-\omega_{n}\right \Vert _{C_{0}%
^{d}([0,n])}\leq \frac{2M+1}{n}$. Since $X\in C_{b}(\Omega)$, we get
$X(\omega_{n})\rightarrow X(\omega)$ as $n\rightarrow \infty$. We have
\[
X(\omega)\geq X^{(n)}(\omega)\geq X(\omega_{n})+n\left \Vert \omega-\omega
_{n}\right \Vert _{C_{0}^{d}([0,n])}-\frac{1}{n},
\]
thus%
\[
n\left \Vert \omega-\omega_{n}\right \Vert _{C_{0}^{d}([0,n])}\leq
|X(\omega)-X(\omega_{n})|+\frac{1}{n}.
\]
%\begin{align*}
%n\left \Vert \omega-\omega_{\varepsilon}\right \Vert _{C([0,T])}  &
%\leq|X(\omega)-X(\omega_{\varepsilon})|\leq \varphi(\left \Vert \omega
%-\omega_{\varepsilon}\right \Vert _{C([0,T])})\\
%&  \leq \varphi(\frac{3M}{n}).
%\end{align*}
We also have%
\begin{align*}
X(\omega_{n})-X(\omega)+n\left \Vert \omega-\omega_{n }\right \Vert _{C_{0}%
^{d}([0,n])}  &  \geq X^{(n)}(\omega)-X(\omega)\\
&  \geq X(\omega_{n})-X(\omega)+n\left \Vert \omega-\omega_{n}\right \Vert
_{C_{0}^{d}([0,n])}-\frac{1}{n}.
\end{align*}
From the above two relations we obtain%
\begin{align*}
|X^{(n)}(\omega)-X(\omega)|  &  \leq|X(\omega_{n})-X(\omega)|+n\left \Vert
\omega-\omega_{n}\right \Vert _{C_{0}^{d}([0,n])}+\frac{1}{n}\\
&  \leq2(|X(\omega_{n})-X(\omega)|+\frac{1}{n})\rightarrow0 \  \text{as}
\ n\rightarrow \infty.
\end{align*}
Thus \textbf{3} is obtained.
\end{proof}

\begin{proposition}
\label{pr11} For each $X\in C_{b}(\Omega)$ and $\varepsilon>0$ there exists a
$Y\in L_{ip}(\Omega)$ such that $\mathbb{\hat{E}}[|Y-X|]\leq \varepsilon$.
\end{proposition}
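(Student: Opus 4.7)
The plan is to build $Y$ in two steps: first replace $X$ by the Lipschitz regularization $X^{(n)}$ of Lemma \ref{le10}, then approximate $X^{(n)}$ by a cylinder function obtained via piecewise linear interpolation of the path on $[0,n]$.

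For the first reduction, each $X^{(n)}$ lies in $C_b(\Omega)$: it is bounded by $M:=\sup_\Omega|X|$ and $n$-Lipschitz in $\|\cdot\|_{C_0^d([0,n])}$, hence continuous for the distance $\rho$. Consequently $X-X^{(n)}$ is a nonnegative sequence in $C_b(\Omega)$ decreasing pointwise to $0$ by Lemma \ref{le10}(3). Proposition \ref{pr8} yields $\mathbb{\hat{E}}[X-X^{(n)}]\downarrow 0$, so I fix $n$ large enough that $\mathbb{\hat{E}}[X-X^{(n)}]\leq \varepsilon/2$.

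For the second reduction I exploit that $X^{(n)}(\omega)$ depends on $\omega$ only through $\omega|_{[0,n]}$. Partition $[0,n]$ by $t_i=in/N$, $i=0,\dots,N$, let $\omega^N\in\Omega$ be the piecewise linear path through $(0,0),(t_1,\omega_{t_1}),\dots,(t_N,\omega_{t_N})$ (extended by the constant $\omega_{t_N}$ on $[n,\infty)$), and set $Y_N(\omega):=X^{(n)}(\omega^N)$. Since the interpolation map $(x_1,\dots,x_N)\mapsto \omega^N$ is $1$-Lipschitz from the max norm on $\mathbb{R}^{d\times N}$ into $\|\cdot\|_{C_0^d([0,n])}$, the composition $Y_N=\varphi_N(B_{t_1},\dots,B_{t_N})$ has $\varphi_N\in C_{l.Lip}(\mathbb{R}^{d\times N})$, so $Y_N\in L_{ip}(\Omega)$. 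Writing $\eta_N(\omega):=\sup\{|\omega_s-\omega_t|:s,t\in[0,n],\,|s-t|\leq n/N\}$ for the modulus of continuity on $[0,n]$, a direct convex-combination argument gives $\|\omega-\omega^N\|_{C_0^d([0,n])}\leq \eta_N(\omega)$, and together with $|X^{(n)}|\leq M$ this produces
\[
|X^{(n)}(\omega)-Y_N(\omega)|\leq \bigl(n\eta_N(\omega)\bigr)\wedge 2M.
\]

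The right-hand side belongs to $C_b(\Omega)$ (since $\eta_N$ is continuous under uniform convergence on $[0,n]$), is bounded by $2M$, and decreases pointwise to $0$ as $N\to\infty$ because every continuous path is uniformly continuous on $[0,n]$. A second application of Proposition \ref{pr8} gives $\mathbb{\hat{E}}[(n\eta_N)\wedge 2M]\downarrow 0$; choosing $N$ large yields $Y:=Y_N\in L_{ip}(\Omega)$ with $\mathbb{\hat{E}}[|X^{(n)}-Y|]\leq\varepsilon/2$, so $\mathbb{\hat{E}}[|X-Y|]\leq\varepsilon$. The main obstacle is the second step: to invoke Proposition \ref{pr8} we need a bounded continuous error that decreases to $0$, and the truncation by $2M$ is what turns the potentially unbounded quantity $n\eta_N$ into an admissible sequence in $C_b(\Omega)$.
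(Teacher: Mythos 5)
Your proof is correct, and its skeleton coincides with the paper's: first reduce $X$ to the Lipschitz regularization of Lemma \ref{le10} via Proposition \ref{pr8}, then pass to a cylinder function by piecewise linear interpolation of the path. Where you genuinely diverge is in how the interpolation error is controlled. The paper invokes the weak compactness of $\mathscr{P}$ to pick a compact set $K\subset\Omega$ with $\mathbb{\hat{E}}[\mathbf{1}_{K^{C}}]\leq\varepsilon/6M$, uses the equicontinuity of $K$ (Arzel\`a--Ascoli) to get \emph{uniform} convergence $\sup_{\omega\in K}\|\omega-\omega^{(n_0)}(\omega)\|\to 0$, and splits the expectation over $K$ and $K^{C}$. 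You instead dominate the error pointwise by the truncated modulus of continuity $\bigl(n\eta_N\bigr)\wedge 2M$, observe that this is a sequence in $C_b(\Omega)$ decreasing to $0$ (each path being uniformly continuous on $[0,n]$, and $\eta_N$ being $2$-Lipschitz for the sup norm on $[0,n]$, hence $\rho$-continuous), and apply Proposition \ref{pr8} a second time. This is a legitimate and arguably cleaner route: it avoids any explicit appeal to tightness or to a choice of compact set, at the price of leaning twice on Proposition \ref{pr8} (which of course itself encodes the compactness of $\mathscr{P}$). The one point you rightly flagged as essential is the truncation at $2M$: without it $n\eta_N$ need not be bounded and Proposition \ref{pr8} would not apply. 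All the auxiliary claims you use --- that $X^{(n)}$ depends on $\omega$ only through $\omega|_{[0,n]}$, that the interpolation map is $1$-Lipschitz from the max norm on $\mathbb{R}^{d\times N}$ so that $\varphi_N\in C_{l.Lip}(\mathbb{R}^{d\times N})$, and that $\|\omega-\omega^N\|_{C_0^d([0,n])}\leq\eta_N(\omega)$ by convexity --- check out.
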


\begin{proof}
We denote by $M=\sup_{\omega \in \Omega}|X(\omega)|$. By Proposition
\ref{pr8} and Lemma \ref{le10}, we can find $\mu>0$, $T>0$ and
$\bar{X}\in C_{b}(\Omega _{T})$ such that
$\mathbb{\hat{E}}[|X-\bar{X}|]<\varepsilon/3$, $\sup _{\omega \in
\Omega}|\bar{X}(\omega)|\leq M$ and
\[
|\bar{X}(\omega)-\bar{X}(\omega^{\prime})|\leq \mu \left \Vert \omega
-\omega^{\prime}\right \Vert _{C_{0}^{d}([0,T])},\  \  \forall \omega
,\omega^{\prime}\in \Omega.
\]
Now for each positive integer $n$, we introduce a mapping:$\omega^{(n)}%
(\omega): \Omega \mapsto \Omega$ by%
\[
\omega^{(n)}(\omega)(t)=\sum_{k=0}^{n-1}\frac{\mathbf{1}_{[t_{k}^{n}%
,t_{k+1}^{n})}(t)}{t_{k+1}^{n}-t_{k}^{n}}[(t_{k+1}^{n}-t)\omega(t_{k}%
^{n})+(t-t_{k}^{n})\omega(t_{k+1}^{n})]+ \mathbf{1}_{[T,\infty)}%
(t)\omega(t),\
\]
where $t_{k}^{n}=\frac{kT}{n},\ k=0,1,\cdots,n$. We set $\bar{X}^{(n)}%
(\omega):=\bar{X}(\omega^{(n)}(\omega))$, then
\begin{align*}
|\bar{X}^{(n)}(\omega)-\bar{X}^{(n)}(\omega^{\prime})|  &  \leq \mu \sup
_{t\in \lbrack0,T]}|\omega^{(n)}(\omega)(t)-\omega^{(n)}(\omega^{\prime})(t)|\\
&  =\mu \sup_{k\in \lbrack0,\cdots,n]}|\omega(t_{k}^{n})-\omega^{\prime}%
(t_{k}^{n})|.
\end{align*}
We now choose a compact subset $K\subset \Omega$ such that
$\mathbb{\hat{E}}[\mathbf{1}_{K^{C}}]\leq \varepsilon/6M$. Since
$\sup_{\omega \in K}\sup _{t\in
\lbrack0,T]}|\omega(t)-\omega^{(n)}(\omega)(t)|\rightarrow0$, as
$n\rightarrow \infty$, we then can choose a sufficiently large
$n_{0}$ such that
\begin{align*}
\sup_{\omega \in K}|\bar{X}(\omega)-\bar{X}^{(n_{0})}(\omega)|  &
=\sup_{\omega \in K}|\bar{X}(\omega)-\bar{X}(\omega^{(n_{0})}(\omega))|\\
&  \leq \mu \sup_{\omega \in K}\sup_{t\in \lbrack0,T]}|\omega(t)-\omega^{(n_{0}%
)}(\omega)(t)|\\
&  <\varepsilon/3.
\end{align*}
We set $Y:=\bar{X}^{(n_{0})}$, it follows that
\begin{align*}
\mathbb{\hat{E}}[|X-Y|]  &  \leq \mathbb{\hat{E}}[|X-\bar{X}|]+\mathbb{\hat{E}%
}[|\bar{X}-\bar{X}^{(n_{0})}|]\\
&  \leq \mathbb{\hat{E}}[|X-\bar{X}|]+\mathbb{\hat{E}}[\mathbf{1}_{K}|\bar
{X}-\bar{X}^{(n_{0})}|]+2M\mathbb{\hat{E}}[\mathbf{1}_{K^{C}}]\\
&  <\varepsilon.
\end{align*}
The proof is complete.
\end{proof}

\  \  \

By Proposition \ref{pr11}, we can easily get $L_{G}^{1}(\Omega)=\mathbb{L}%
_{c}^{1}$. Furthermore, we can get $L_{G}^{p}(\Omega)=\mathbb{L}_{c}^{p}$,
$\forall p>0$.


\begin{thebibliography}{99999999}                                                                                         %


\bibitem[Delb2]{Delb}Delbaen,F., \textit{Coherent measures of risk on general
probability space}, In: Advances in Finance and Stochastics, Essays
in Honor of Dieter Sondermann (Sandmann,K., Schonbucher,P.J. eds.),
Springer, Berlin, pp 1-37, 2002.

\bibitem[Del]{del}Dellacherie,C., \textit{\ Capacit\'es et Processus
Stochastiques}, Springer Verlag 1972.

\bibitem[DeMe]{deme}Dellacherie,C. and Meyer,P.A., \textit{Probabilit\'{e}s
et Potentiel, Chap. IX-XI}, Hermann, Paris 1983.

\bibitem[DenMa]{denma}Denis,L. and Martini,C., \textit{A Theoretical Framework
for the Pricing of Contingent Claims in the Presence of Model Uncertainty},
The Annals of Applied Probability, vol. 16, No. 2, pp 827-852, 2006.

\bibitem[DHP]{DHP}Denis,L., Hu,M. and Peng,S., Function spaces and capacity
related to a sublinear expectation: application to G-Brownian Motion Pathes,
see arXiv:0802.1240v1 [math.PR] 9 Feb 2008.

\bibitem[FeDlp]{fedlp}Feyel,D. and De La Pradelle,A., \textit{Espaces de
Sobolev Gaussiens}, Ann. Inst. Fourier, 39-4, pp 875-908, 1989.

\bibitem[FoSch]{FoSch}F\"{o}llmer,H. and Schied,A., \textit{Convex measures of
risk and trading constraints }, Finance and Stochastics 6 (4), 429-447, 2002.

\bibitem[Huber]{Huber}Huber,P., Robust Statistics, Wiley, New York, 1981.

\bibitem[HuSt]{HuSt}Huber,P. and Strassen,V., \textit{Minimax tests and the
Neyman-Pearson Lemma for capacity}, The Annals of Statistics, Vol. 1, No. 2 pp
252-263, 1973.

\bibitem[P1]{Peng2004}Peng,S., Filtration Consistent Nonliear Expectations and
Evaluations of Contingent Claims. Acta Mathematicae Applicatae
Sinica, English Series 20(2), 1-24, 2004, Springer.

\bibitem[P2]{Peng2005}Peng,S., Nonlinear expectations and nonlinear Markov
chains. Chin. Ann. Math.26B(2), 159-184, 2005.

\bibitem[P3]{Peng2006a}Peng,S., $G$-Expectation,$G$-Brownian Motion and
Related Stochastic Calculus of It\^{o}'s type. preprint (pdf-file
available in arXiv:math.PR/0601035 vl 3 Jan 2006), in Proceedings of
the 2005 Abel Symposium.

\bibitem[P4]{Peng2006b}Peng,S., Multi-Dimensional $G$-Brownian Motion
and Related Stochastic Calculus under $G$-Expectation, Stochastic
Processes and their Applications 118, 2223-2253, 2008.

\bibitem[P5]{Peng2007b}Peng, S., G-Brownian Motion and Dynamic Risk Measure
under Volatility Uncertainty, lecture Notes: arXiv:0711.2834v1 [math.PR] 19
Nov 2007.

\bibitem[Peng2008]{NewCLT}Peng, S., A New Central Limit Theorem under Sublinear
Expectations, Preprint: arXiv:0803.2656v1 [math.PR] 18 Mar 2008.

\bibitem[ReYo]{ReYo}Revuz,D. and Yor,M., \textit{Continuous Martingale and
Brownian Motion}, Springer Verlag, Berlin-Heidelberg-New York, 1994.
\end{thebibliography}
\end{document}